\documentclass[12pt]{article}
\usepackage{graphicx} 
\usepackage{amsmath,amssymb,amsthm}
\usepackage{tikz-cd}
\usepackage{comment}
\usepackage{hyperref}
\usepackage[left=1in,right=1in]{geometry}


\newcommand{\cat}[1]{\mathbb{#1}}  
\newcommand{\scat}[1]{\mathcal{#1}}  
\newcommand{\name}[1]{\mathsf{#1}}  

\renewcommand{\t}{\ensuremath{\mathsf{t}}}

\newcommand{\tT}{\ensuremath{{\t}:\TT\to\Ty}}
\newcommand{\Ty}{\ensuremath{\mathsf{T}}}
\newcommand{\TT}{\ensuremath{\dot{\mathsf{T}}}}


\newcommand{\catC}{\scat{C}}

\newcommand{\EE}{\scat{E}}

\newcommand{\II}{\cat{I}}
\newcommand{\I}{\mathbf{I}}




\newcommand{\too}{\longrightarrow}
\newcommand{\tto}{\rightrightarrows}

\newcommand{\mono}{\rightarrowtail}


\newcommand{\Set}{\name{Set}}


\newcommand{\Gpd}{\name{Gpd}}



\newcommand{\etalchar}[1]{$^{#1}$}
\newcommand{\op}[1]{{#1}^{\mathsf{op}}} 
\newcommand{\psh}[1]{{\Set}^{\op{#1}}} 




\newcommand{\pbmark}{\arrow[dr, phantom, "\lrcorner", very near start]}

  \newtheorem{theorem}{Theorem}[section]
  \newtheorem*{theorem*}{Theorem}
  \newtheorem{lemma}[theorem]{Lemma}
  \newtheorem{proposition}[theorem]{Proposition}

{\theoremstyle{definition}
  \newtheorem{definition}[theorem]{Definition}
  \newtheorem{example}[theorem]{Example}
  \newtheorem{remark}[theorem]{Remark}

}

\usepackage{listings}
\definecolor{keywordcolor}{rgb}{0.0, 0.1, 0.6}   
\definecolor{tacticcolor}{rgb}{0.0, 0.1, 0.6}    
\definecolor{commentcolor}{rgb}{0.4, 0.4, 0.4}   
\definecolor{symbolcolor}{rgb}{0.0, 0.1, 0.6}    
\definecolor{sortcolor}{rgb}{0.1, 0.5, 0.1}      
\definecolor{attributecolor}{rgb}{0.7, 0.1, 0.1} 

\lstset{language=lean, xleftmargin=1em}
\lstset{backgroundcolor=\color{white}}

\newcommand{\leanlink}[1]{%
  \href{#1}{%
    \raisebox{-0.2ex}{\includegraphics[height=1em]{checkmark.pdf}}%
  }%
}

\title{Path Types in Algebraic Type Theory}
\author{Steve Awodey and Joseph Hua\\
Carnegie Mellon University}
\date{January 2026}
\begin{document}

\maketitle
\begin{abstract}
A new approach to the semantics of identity types in intensional Martin-L\"of type theory is proposed, assuming only a category with finite limits and an interval. The specification of \emph{extensional} identity types in the original presentation of natural models
paralleled that of the other type formers $\Sigma$ and $\Pi$, but the treatment of the \emph{intensional} case there was less uniform.  It was later reformulated to an account based on polynomials; here a further improvement in the style of the other type formers is achieved by employing an interval, in order to give a single pullback specification of a model with \emph{path types}.  The interval is also used to specify a (Hurewicz) fibration structure on the universe of the model. It is shown that the combination of these two conditions suffices to model the intensional identity rules, assuming only finite limits.  The addition of an interval also relates the current treatment to that of cubical type theory. 
\end{abstract}

\section{Introduction}

We propose a new approach to the semantics of identity types in intensional Martin-L\"of type theory, assuming only a category with finite limits and a bipointed, exponentiable object (an \emph{interval}). The treatment is quite simple and general, so it should apply in many different settings such as clans \cite{joyal2017}, model categories \cite{shulman2019, awodey2026}, path categories \cite{berg2018,BergMoerdijk2018}, categories with families \cite{dybjer2021}, and natural models \cite{awodey2016}. This new approach is adopted in the HoTTLean project \cite{hua2025}, simplifying the formalization in Lean.

The specification of \emph{extensional} identity types in the natural models of \cite{awodey2016} parallels that of the other type formers $\Sigma$ and $\Pi$ and is entirely satisfactory, but the treatment of the \emph{intensional} case there was less uniform. Following a suggestion of Richard Garner, it was updated in \cite{awodey2025} to a cleaner account based on polynomials; here a further improvement in the style of the other type formers is achieved by employing an interval, in order to give a single pullback specification of a model with \emph{path types}. The interval is also used to specify a (Hurewicz) fibration structure on the universe of the model. It is shown in Section \ref{sec:Interval} that the combination of these two conditions already suffices to model the usual intensional identity rules, assuming only finite limits and the exponentiability of the interval.

The addition of an interval also relates the current treatment to that of cubical type theory \cite{cohen2016}, as is explained in Section \ref{sec:cubical-Kan}. Specifically, we show that any classified type family $A \to X$ is in fact a cubical Kan fibration.
We provide examples of path types in Section \ref{sec:examples} and conclude the paper by summarizing the Lean formalization
in Section \ref{sec:formalization}.

Natural models were originally defined in categories of presheaves and were then abstracted to the setting of a general locally cartesian closed category in \cite{awodey2025}. Here we only assume an ambient category with finite limits, while retaining the same approach to the semantics of type theory based on a \emph{classifier for types} $\tT$ (with chosen pullbacks), which we still refer to as a \emph{natural model} in the present, more general setting.

\paragraph{Acknowledgement}
We have benefitted from discussions with our collaborators on the HoTTLean project: Mario Carneiro, Sina Hazratpour, Wojciech Nawrocki, Spencer Woolfson, and Yiming Xu, as well as with Reid Barton.  This material is based upon work supported by the Air Force Office of Scientific Research under MURI award number FA9550-21-1-0009.

\section{Path types}\label{sec:Interval}

\begin{definition}\label{def:Interval}

By an \emph{interval} in a finite limit category $\EE$ we simply mean a bipointed, exponentiable object $d_0, d_1:1\rightrightarrows \I$.  In terms of such an interval, we then define further:
\begin{enumerate} 
\item For every object $A$, there is a \emph{pathobject} factorization of the diagonal $\delta:A \to A\times A$, obtained by exponentiating $A$ by $1\rightrightarrows \I \to 1$, 
\[\begin{tikzcd}
	A \ar[r,"\rho"] \ar[rd, swap,"\delta"] & A^\I \ar[d,"{\langle \varepsilon_0, \varepsilon_1\rangle}"]\\
	& A \times A
 \end{tikzcd} 
\]
where we write $\rho = A^{!_\I}$, and $\varepsilon_0 = A^{d_0}$, and $\varepsilon_1 = A^{d_1}$.

\item Similarly, and abusing notation slightly, for any $A\to X$ regarded as an object in the slice category $\EE/_{\!X}$ we define the \emph{relative pathobject factorization}
\[\begin{tikzcd}
	A \ar[r,"\rho"] \ar[rd, swap,"\delta"] & A^\I \ar[d,"{\langle \varepsilon_0, \varepsilon_1\rangle}"]\\
	& A \times_{\!X} A
 \end{tikzcd} 
\]
to be the pathobject factorization in $\EE/_{\!X}$ with respect to the pulled-back interval $1_X \rightrightarrows {!_X^*}\I \to 1_X$, where we are using the pullback functor ${!_X^*} : \EE\to\EE/_{\!X}$ along $!_X : X \to 1$.
\end{enumerate}
We call $\varepsilon := \langle \varepsilon_0, \varepsilon_1\rangle : A^I \to A \times_{\!X} A$ the \emph{(relative) pathobject} of $A \to X$,
and $\rho : A \to A^\I$ its \emph{reflexivity path}.
\end{definition}
   
\begin{lemma}
   For any object $A\to X$ over any base $X$, the relative pathobject factorization
\[\begin{tikzcd}
	A \ar[r,"\rho"] \ar[rd, swap,"\delta"] & A^\I \ar[d,"\varepsilon"]\\
	& A \times_{\!X} A\,,
 \end{tikzcd} 
\]
is stable under pullback along any map $f : Y\to X$, in the sense that the factorization pulls back to the relative pathobject factorization over $Y$ of the pullback $f^*A\to Y$, resulting in a canonical isomorphism over $Y$,
\[
f^*(A^\I)\, \cong\,  (f^*A)^\I\,.
\]
\qed
\end{lemma}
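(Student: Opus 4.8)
The plan is to use the universal property of the relative path object as an exponential in a slice, and to show that the pullback functor $f^{*}:\EE/_{\!X}\to\EE/_{\!Y}$ preserves it. Throughout I write $\Sigma_{f}\adj f^{*}$ for the adjunction between post-composition along $f$ and pullback along $f$, which exists in any category with pullbacks. With this in hand the statement reduces to three elementary facts together with the Yoneda lemma.

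First I would record that $f^{*}$ carries the interval over $X$ to the interval over $Y$. Since $!_{Y}=!_{X}\circ f$, contravariant functoriality of pullback gives ${!_Y^*}\cong f^{*}\circ{!_X^*}$, hence $f^{*}({!_X^*}\I)\cong{!_Y^*}\I$; and as $f^{*}$ preserves finite limits it also respects the bipointing, carrying $1_{X}\rightrightarrows{!_X^*}\I\to1_{X}$ to $1_{Y}\rightrightarrows{!_Y^*}\I\to1_{Y}$. (That ${!_X^*}\I$ is exponentiable in $\EE/_{\!X}$ is automatic: exponentiability of $\I$ means $\I\to1$ is an exponentiable morphism, and exponentiable morphisms are stable under pullback, so ${!_X^*}\I\to X$ is exponentiable as well.) It therefore only remains to see that $f^{*}$ commutes with exponentiation by the interval.

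The heart of the matter is the isomorphism $f^{*}\big((A\to X)^{{!_X^*}\I}\big)\cong(f^{*}A\to Y)^{{!_Y^*}\I}$, which I would prove by Yoneda. For a test object $T\to Y$, the adjunction $\Sigma_{f}\adj f^{*}$ followed by the defining property of the slice exponential gives
\[
\Homx{\EE/_{\!Y}}{T,\,f^{*}(A^{\I})}\;\cong\;\Homx{\EE/_{\!X}}{\Sigma_{f}T\times_{\!X}{!_X^*}\I,\;A}.
\]
The projection (Frobenius) formula $\Sigma_{f}T\times_{\!X}{!_X^*}\I\cong\Sigma_{f}\big(T\times_{\!Y}{!_Y^*}\I\big)$ — valid in any category with pullbacks and using $f^{*}({!_X^*}\I)\cong{!_Y^*}\I$ from the previous step — rewrites the right-hand side, and a second application of $\Sigma_{f}\adj f^{*}$ transports it back across $f$, yielding
\[
\Homx{\EE/_{\!X}}{\Sigma_{f}\big(T\times_{\!Y}{!_Y^*}\I\big),\;A}\;\cong\;\Homx{\EE/_{\!Y}}{T\times_{\!Y}{!_Y^*}\I,\;f^{*}A}\;\cong\;\Homx{\EE/_{\!Y}}{T,\,(f^{*}A)^{\I}}.
\]
Since this chain is natural in $T$, the Yoneda lemma delivers the asserted canonical isomorphism $f^{*}(A^{\I})\cong(f^{*}A)^{\I}$ over $Y$.

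Finally I would verify that the isomorphism respects the entire factorization and not merely the object $A^{\I}$. Because $\rho$ and $\varepsilon=\langle\varepsilon_{0},\varepsilon_{1}\rangle$ are built functorially from the interval data — $\rho$ from the collapse $\I\to1$ and the $\varepsilon_{i}$ from the two points $d_{0},d_{1}$ — and because $f^{*}$ preserves finite limits (so that $f^{*}(A\times_{\!X}A)\cong f^{*}A\times_{\!Y}f^{*}A$) and the bipointing by the first step, the reflexivity and endpoint maps are carried to their counterparts over $Y$. I expect the only genuine work — and hence the main obstacle — to lie in this last coherence check: confirming that the abstract Yoneda isomorphism intertwines $\rho,\varepsilon_{0},\varepsilon_{1}$ with the corresponding maps for $f^{*}A\to Y$. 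Everything else is a formal consequence of the pullback adjunction and the projection formula, which is consonant with the lemma being stated without proof.
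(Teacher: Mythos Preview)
Your argument is correct: the Yoneda computation via $\Sigma_{f}\dashv f^{*}$ together with the projection formula $\Sigma_{f}(T\times_{Y}f^{*}Z)\cong\Sigma_{f}T\times_{X}Z$ (which is just the pullback pasting lemma) is the standard way to see that $f^{*}$ commutes with exponentiation by a pulled-back object, and your treatment of the bipointing and of the coherence with $\rho,\varepsilon_0,\varepsilon_1$ is accurate.

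There is nothing to compare against: the paper states the lemma with a bare \qed{} and gives no proof, treating the result as routine. Your write-up simply supplies the details the authors chose to suppress; the only remark worth making is that your caveat about the ``main obstacle'' lying in the final coherence check is perhaps overstated --- since $\rho$ and $\varepsilon_i$ arise by applying the (pullback-preserving) functor $f^{*}$ to maps induced by $1\rightrightarrows\I\to 1$, and your Yoneda isomorphism is natural in those structure maps, the compatibility is automatic rather than something requiring separate verification.
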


\begin{definition}\label{def:pathtypes}
A natural model $\tT$ in a finite limit category $\EE$ with an interval will be said to \emph{have path types} if there are structure maps $(\mathsf{Path},\mathsf{path})$ making a pullback square,
\begin{equation}\label{diag:pathtypes}
\begin{tikzcd}
	\TT^\I \ar[r,"\mathsf{path}"] \ar[d,swap,"\varepsilon"] &  \TT \ar[d, "\t"]\\  
	\TT \times_\Ty \TT \ar[r, swap,"\mathsf{Path}"] & \Ty
 \end{tikzcd}
 \end{equation}
 where $\varepsilon :\TT^\I \to \TT \times_\Ty \TT $ is the relative pathobject of $\tT$ over $\Ty$.
 \end{definition}

In order to show that a natural model with path types also has intensional Identity types in the sense of \cite[Definition 11]{awodey2025}, we require an additional condition on the map $\tT$, namely that it should be a ``Hurewicz fibration'' in the following sense.

\begin{definition}\label{def:Hfibration}
A map $f: Y\to X$ will be called a \emph{Hurewicz fibration} (with respect to an interval $d_0, d_1:1\rightrightarrows \I$),  if either one of the following equivalent conditions holds:
\begin{enumerate}
    \item The map $f$ has the right lifting property with respect to the
    $0$-end inclusion of every cylinder $Z\times d_0 : Z\times 1\to Z\times \I$.
    In detail, given any object $Z$ and maps $y$ and $h$ as indicated below, there exists a diagonal filler $\tilde{h}$ making the following diagram commute.
    \begin{equation}
    \begin{tikzcd}
    Z\times 1\ar[d,swap, "{Z\times d_0}"] \ar[r, "y"] &  {Y} \ar[d, "f"]\\
    Z\times \I \ar[r,swap,"h"]\ar[ru,swap,dotted,"\tilde{h}"]   & {X}
    \end{tikzcd}
    \end{equation}
    We may regard $h : Z\times \I \to X$ as a homotopy between the maps $h_0, h_1 : Z\to X$ obtained by composing it with the two ends of the cylinder  $Z\times 1 \rightrightarrows Z\times \I$, and the lift $\tilde{h} : Z\times \I \to Y$ as one with the specified $0$-end $y= \tilde{h}_0$.  (Note that we do not need to also require a lift for the $1$-end inclusion $Z\times d_1 : Z\times 1\to Z\times \I$.)
    \item The following diagram is a weak pullback.
    \begin{equation}\label{diag:hfibwpb}
    \begin{tikzcd}
	   Y^\I \ar[d, swap, "{f^\I}"] \ar[r, "{\varepsilon_0}"] & Y \ar[d, "f"] \\  
	   X^\I \ar[r, swap,"\varepsilon_0"] & X
    \end{tikzcd}
    \end{equation}
    This means that the comparison map to the actual pullback has a section $\ell : X^\I \times_{\!X} Y \to Y^\I$, as indicated below.
    \begin{equation}\label{diag:hfibequiv}
    \begin{tikzcd}
	   Y^\I \ar[rdd, bend right,swap, "f^\I"] \ar[rd] \ar[rrd, bend left, "\varepsilon_0"] && \\
	   & X^\I \times_{\!X} Y  \ar[lu, bend right = 20, dotted,swap, "\ell"]  \ar[d] \ar[r]  & Y \ar[d, "f"]\\  
	   & X^\I \ar[r, swap,"\varepsilon_0"] & X
    \end{tikzcd}
    \end{equation}
\end{enumerate}
 \end{definition} 
 \begin{proof} $(1) \iff (2)$ (in presheaves on $\EE$, if need be for colimits):
 
In terms of the so-called ``Leibniz adjunction'' ${\otimes} \dashv {\Rightarrow}$ (see \cite{JT2007}), the comparison map ${\langle f^\I, \varepsilon_0 \rangle} : Y^\I \to X^\I \times_{\!X} Y$ is the ``pullback-hom'' $d_0\Rightarrow f$, in the notation of \cite{awodey2026}.

Now, an arbitrary map $g : A\to B$ has a section just if it lifts on the right against $0\to Z$, for all objects $Z$ (one can take $Z=B$). Thus ${d_0\Rightarrow f}$ has a section just if $0_Z \pitchfork (d_0\Rightarrow f)$ for all $Z$, which is equivalent by the adjunction to $(0_Z \otimes d_0) \pitchfork f$ for all $Z$.  But we have $0_Z \otimes d_0 = Z \times d_0 : Z\to Z\times \I$.
 \end{proof}
 
A section $\ell : X^\I \times_{\!X} Y \to Y^\I$ as in \eqref{diag:hfibequiv} may be regarded as a ``lifting operation'' that takes a path $p : x_0 \leadsto x_1$ in $X$, and a point $y_0\in Y$ over $x_0$, to a lifted path $\tilde{p} : y_0 \leadsto y$ in $Y$ lying over $p$.  

\begin{definition}\label{def:normalHfibration}
A Hurewicz fibration $f : Y\to X$ is called \emph{normal} if it comes with a lifting operation $\ell : X^\I \times_{\!X} Y \to Y^\I$, as in (2) of Definition \ref{def:Hfibration}, that moreover takes a reflexivity path in $X$ to a reflexivity path in $Y$, as indicated below.
\begin{equation}\label{diag:normal}
\begin{tikzcd}
Y \ar[rr,"\rho"] && Y^\I \\
 X\times_{\!X} Y \ar[u, "p_2"] \ar[rr, swap,"{\rho \times_{\!X} Y}"] && X^\I \times_{\!X} Y  \ar[u,swap,"\ell"]
 \end{tikzcd}
 \end{equation}
 \end{definition}
\smallskip

Now consider the following \emph{axioms} on a natural model $\tT$ in a finite limit category $\EE$ with an interval $d_0, d_1:1\rightrightarrows \I$.
\begin{itemize}
\item[(A1)] $\tT$ has path types, as in Definition \ref{def:pathtypes}.
\item[(A2)] $\tT$ is a normal Hurewicz fibration, as in Definition \ref{def:normalHfibration}.
\end{itemize}

\begin{lemma} \label{lemma:connection-on-U}
Assuming axioms (A1) and (A2), the map $\tT$ has a \emph{connection}: a map $\chi : \TT^\I \to (\TT^\I)^\I \cong \TT^{\I\times\I}$ (over $\Ty$) making the following commute. (An elementary illustration is given in Diagram \ref{diag:2boxfilling}.)
\begin{equation}\label{diag:connection}
\begin{tikzcd}
	& \TT^\I  \\
\TT^\I \ar[r, swap,"\chi"] \ar[d,swap, "\varepsilon_0"] \ar[ru, "{=}"] & \TT^{\I\times\I} \ar[d, "\varepsilon_0"] \ar[u,swap, "\varepsilon_1"]  \\ 
\TT  \ar[r, swap,"\rho"] & \TT^\I 
 \end{tikzcd}
 \end{equation}
Moreover, the connection $\chi$ is \emph{normal} in the sense that $\chi\rho = \rho\rho$\,,
\begin{equation}\label{diag:normalconnection}
\begin{tikzcd}
\TT^I \ar[r,"\chi"] & \TT^{\I\times\I}  \\ 
\TT  \ar[u, "\rho"] \ar[r, swap,"\rho"] & \TT^\I.\ar[u,swap, "\rho"] 
 \end{tikzcd}
 \end{equation}
\end{lemma}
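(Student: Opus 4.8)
The plan is to construct $\chi$ as a lift against the relative pathobject of $\t$, exploiting that this map is a \emph{normal Hurewicz fibration}, and then to read the two boundary equations of \eqref{diag:connection} off the two defining equations of the lifting operation, with \eqref{diag:normalconnection} coming from its normality. Throughout I would work over $\Ty$, where $\t$ behaves as a single type and the pathobject is fibrewise; since $\I$ is exponentiable we may freely identify $(\TT^\I)^\I\cong\TT^{\I\times\I}$.

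First I would record the lifting structure available. By axiom (A1), the pullback square \eqref{diag:pathtypes} exhibits the relative pathobject $\varepsilon:\TT^\I\to\TT\times_\Ty\TT$ as a pullback of $\t$ along $\mathsf{Path}$. A Hurewicz fibration is, by Definition \ref{def:Hfibration}(1), defined by a right lifting property, hence is automatically stable under pullback, and a pulled-back lifting operation is again normal in the sense of Definition \ref{def:normalHfibration}; so by axiom (A2) the map $\varepsilon$ is a normal Hurewicz fibration. In particular it carries a lifting operation $\ell$ as in Definition \ref{def:Hfibration}(2), sectioning the pullback-hom comparison, and $\ell$ is normal.

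Next I would feed $\ell$ the tautological homotopy that slides one endpoint of a path along the path itself. Writing $\mathrm{ev}:\TT^\I\times\I\to\TT$ for the evaluation, there is a canonical homotopy $w:\TT^\I\to(\TT\times_\Ty\TT)^\I$ in the base whose value on $p$ is the path $s\mapsto(\varepsilon_0 p,\,\mathrm{ev}(p,s))$: its first coordinate is constant at the source $\varepsilon_0 p$ and its second traces $p$, so $w$ starts at $\langle\varepsilon_0,\varepsilon_0\rangle=\varepsilon\rho\varepsilon_0$ and ends at $\langle\varepsilon_0,\varepsilon_1\rangle=\varepsilon$. Applying the lifting operation with chosen $0$-end $\rho\varepsilon_0:\TT^\I\to\TT^\I$ yields $\chi:=\ell(w,\rho\varepsilon_0):\TT^\I\to(\TT^\I)^\I$. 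The start equation of the section, $\varepsilon_0\ell=\mathrm{pr}$, gives the lower square of \eqref{diag:connection}, namely $\varepsilon_0\chi=\rho\varepsilon_0$, while the projection equation $\varepsilon^\I\ell=w$ forces the $1$-end of $\chi(p)$ to be a path whose endpoints are exactly $\varepsilon p$.

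The hard part will be the upper triangle $\varepsilon_1\chi=\mathrm{id}$: that the filler closes up to $p$ \emph{itself} on the $\varepsilon_1$-face, not merely to some path with the correct endpoints. This is precisely where the classified structure (A1) must be used beyond bare fibrancy. Because $\varepsilon$ is a pullback of $\t$, a point of $\TT^\I$ lying over $\varepsilon p$ is determined by its $\mathsf{path}$-component, so it suffices to see that the $\mathsf{path}$-component of the filler restricts to the classifying term of $p$ on that face; I would secure this by taking the \emph{canonical} filler supplied by the universal property of \eqref{diag:pathtypes} — equivalently, by carrying out the two-dimensional box-filling illustrated in Diagram \ref{diag:2boxfilling} with $p$ itself prescribed on the $\varepsilon_1$-face. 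Finally, the normality \eqref{diag:normalconnection} is immediate: substituting $p=\rho a$ makes $w$ the constant homotopy at $\langle a,a\rangle$, whence normality of $\ell$ returns the reflexivity lift and $\chi\rho=\rho\rho$.
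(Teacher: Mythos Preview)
Your construction of $\chi$ matches the paper's: you lift against the relative pathobject $\varepsilon:\TT^\I\to\TT\times_\Ty\TT$, which is normal Hurewicz as a pullback of $\t$ via (A1), using the homotopy $w$ and the $0$-end $\rho\varepsilon_0$. The gap is in how you read off the two boundary equations. You derive the lower square from the $0$-end equation of the lift and then treat $\varepsilon_1\chi=\mathrm{id}$ as a ``hard part'' needing extra input. But in diagram \eqref{diag:connection} the maps $\varepsilon_0,\varepsilon_1:\TT^{\I\times\I}\to\TT^\I$ are the two \emph{endpoint} faces in the original path direction --- the left and right columns of the picture \eqref{diag:2boxfilling1} --- not the cylinder faces. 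With that reading, \emph{both} equations come straight from the single projection equation $\varepsilon^\I\circ\chi = w$ that you already recorded: its two components say that the source of $\chi(p)(s)$ is $p_0$ for all $s$ (giving $\varepsilon_0\chi=\rho\varepsilon_0$) and that the target of $\chi(p)(s)$ is $p(s)$ for all $s$ (giving $\varepsilon_1\chi=\mathrm{id}$). The $0$-end equation determines a third face, and the fourth face (the dotted bottom row in \eqref{diag:2boxfilling1}) is genuinely undetermined --- but the lemma never asks for it.

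Your proposed patches do not work in any case. The pullback \eqref{diag:pathtypes} determines a point of $\TT^\I$ by its endpoints \emph{together with} its $\mathsf{path}$-component, but nothing about the Hurewicz lift $\ell$ controls that $\mathsf{path}$-component at cylinder time $1$; there is no ``canonical filler supplied by the universal property'' here. And ``prescribing $p$ on the $\varepsilon_1$-face'' in addition to the $0$-end is a $2$-box filling problem, not the $1$-box filling that (A2) provides. Your argument for normality, on the other hand, is correct and in fact more explicit than the paper's.
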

\begin{proof}
We can use ``box-filling'' to construct the connection as the transpose of a certain diagonal filler
\[
\tilde{\chi}  : \TT^\I \times \I \to \TT^\I
\]
as follows.  Given any pathspace $A^\I \to A\times A$ that is a Hurewicz fibration, consider a lifting problem of the form: 
\begin{equation}\label{diag:liftforHpathspace}
\begin{tikzcd}
1\ar[d,swap,"{\delta_0}"] \ar[r, "r"] &  {A^\I} \ar[d]\\
\I \ar[r,swap,"{\langle q, p \rangle}"]\ar[ru,dotted,"\tilde{b}"]   & {A\times A}
\end{tikzcd}
\end{equation}
This can be regarded as filling an ``open box'' $(p,q,r)$ in $A$, to give a 2-cube $b: \I\times \I \to A$, which can be depicted schematically as follows:
\begin{equation}\label{diag:2boxfilling}
\begin{tikzcd}
q_0\ar[d,swap,"q"] \ar[r, "r"] &  p_0 \ar[d, "p"]\\
q_1 \ar[r, dotted] \ar[ru,phantom, "b"]  & p_1
\end{tikzcd}
\end{equation}
Now consider the case $q = \rho_{p_0} = r$ and take $\chi_p : \I \to A^\I$ to be the resulting cube: 
\begin{equation}\label{diag:2boxfilling1}
\begin{tikzcd}
p_0\ar[d,swap,"="] \ar[r, "="] &  p_0 \ar[d, "p"]\\
p_0 \ar[r, dotted] \ar[ru,phantom, "\chi_p"]  & p_1
\end{tikzcd}
\end{equation}

We apply this to the case where $A = \TT$ (over $\Ty$), with object of parameters $\TT^\I$, to obtain the following:
\begin{equation}\label{diag:liftforHpathspace1}
\begin{tikzcd}
\TT^\I\times_{\!\Ty} 1\ar[d,swap,"{\TT^\I \times_{\!\Ty}\,\delta_0}"] \ar[r, "r"] &  {\TT^\I} \ar[d]\\
\TT^\I\times_{\!\Ty} \I \ar[r,swap,"{\langle q, p \rangle}"]\ar[ru,dotted,swap,"\tilde{\chi}"]   & {\TT \times_{\!\Ty} \TT}
\end{tikzcd}
\end{equation}
The maps $p,q,r$ are now as follows:
\begin{align*}
p &= \mathsf{eval}: \TT^\I\times_{\!\Ty} \I \to \TT \\
q &= \mathsf{eval} \circ (\rho \varepsilon_0 \times_{\!\Ty} \I): \TT^\I\times_{\!\Ty} \I \to \TT\\
r &= \rho\varepsilon_0 : \TT^\I \to \TT^\I
\end{align*}
Transposing provides the desired map $\chi : \TT^\I \to \TT^{\I\times\I}$ with evaluations $\chi(p)_0 = \rho p_0$ and $\chi(p)_1 = p$ for all $p:\TT^\I$. 
\end{proof}

The usual Id-Elim rule of intensional Martin-L\"of type theory now holds for any type family $A \to X$ that is classified by $\tT$, as follows.   By (A2), $A \to X$ is Hurewicz since it is a pullback of $\tT$.  The path type $A^\I\to A\times_X A$ is also Hurewicz, since it is a pullback of $\TT^\I \to \TT\times_\Ty \TT$, which is Hurewicz by (A1).  By Lemma \ref{lemma:connection-on-U} there is also a (normal) connection on $A\to X$ (again since it is a pullback of a map with such a connection).  

Consider an Id-elimination problem as follows:
\begin{equation}\label{diag:Id-Elim_with_pathtypes}
\begin{tikzcd}
A \ar[d,swap, "\rho"] \ar[r, "c"] &  {C} \ar[d]\\
A^\I\ar[r, swap,"="]\ar[ru,swap,dotted,"j"]   & {A^\I}
\end{tikzcd}
\end{equation}
where the type family $C\to A^\I$ is a pullback of $\tT$, and therefore a Hurewicz fibration, with (normal) lifting operation $\ell : (A^\I)^\I \to C^\I$.  

We argue first with elements to give the idea of the proof, before giving a diagrammatic version of the same argument.  Thus take any $p : A^\I$, which is a path $p:p_0 \leadsto p_1$ in $A$.  Applying the connection on $A$ (resulting from Lemma \ref{lemma:connection-on-U}),  we obtain a (higher) path $\chi_p : \rho p_0 \leadsto p$ in $A^\I$.  Since the outer square of \eqref{diag:Id-Elim_with_pathtypes} commutes, for every $p_0:A$ the term $cp_0:C$ lies over $\rho p_0$, thus we can lift $\chi_p$ to a path $\ell\langle\chi_p, cp_0\rangle  : cp_0 \leadsto \tilde{p}$ in $C$, with the endpoint $\tilde{p}$ over $p$.  We then set 
\[
jp := \tilde{p} = \varepsilon_1\ell\langle\chi_p, cp_0\rangle \,,
\]
which plainly makes the bottom triangle of \eqref{diag:Id-Elim_with_pathtypes} commute.  Observe that in the case $p = \rho a$ for $a:A$, we have $\chi_{\rho a} = \rho_{\rho a}$ since $\chi$ is normal, and then for the lift we have $\ell\langle\chi_{\rho a}, ca\rangle  = \ell\langle\rho_{\rho a}, ca\rangle = \rho ca$, since $\ell$ is normal. Whence $j\rho{a} = \varepsilon_1\rho ca = ca$, as required for the top triangle of \eqref{diag:Id-Elim_with_pathtypes} to commute.

We summarize the result as follows (cf.\ \cite{awodey2018} for a related result).
\begin{proposition} \label{prop:Id-Elim}
Let $\tT$ be a natural model in a finite limit category $\EE$ with an interval $1\rightrightarrows\I$, and assume axioms (A1) and (A2) above.  Then for all $A\to X$ classified by $\tT$, the path type $A \to A^\I \to A\times_{\!X} A$ validates the usual identity type rules of intensional type theory (as stated in diagram \eqref{diag:Id-Elim_with_pathtypes}).
\end{proposition}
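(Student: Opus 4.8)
The plan is to verify the four components of the identity-type structure for the path type $A^\I \to A\times_{\!X}A$. Formation and introduction are immediate from axiom (A1): the relative path object of $A\to X$ supplies the factorization $A\xrightarrow{\rho}A^\I\xrightarrow{\varepsilon}A\times_{\!X}A$, so that $A^\I\to A\times_{\!X}A$ serves as the identity type $\mathrm{Id}_A$ and the reflexivity path $\rho$ as its canonical section. Since $A\to X$ is a pullback of $\tT$, both of these are again pullbacks of the corresponding data on $\tT$, hence remain classified by $\tT$, and they are stable under substitution by the pullback-stability lemma for relative path objects. The remaining work is the eliminator $j$ together with its computation rule, which are jointly encoded by diagram \eqref{diag:Id-Elim_with_pathtypes}; this is precisely the element-level construction sketched above, which I would render diagrammatically.

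First I would collect the two pieces of structure the construction consumes. By (A2) together with pullback-stability, $A\to X$ is a normal Hurewicz fibration and inherits a normal connection $\chi:A^\I\to(A^\I)^\I$ from Lemma \ref{lemma:connection-on-U}, satisfying $\varepsilon_0\chi=\rho\varepsilon_0$, $\varepsilon_1\chi=\mathrm{id}$, and the normality identity $\chi\rho=\rho\rho$. The motive $C\to A^\I$ is a pullback of $\tT$, hence a normal Hurewicz fibration, so it carries a normal lifting operation $\ell:(A^\I)^\I\times_{A^\I}C\to C^\I$ with $\varepsilon_0\ell=p_2$, with the lifted path lying over the given base path, and with $\ell\circ(\rho\times_{A^\I}C)=\rho\,p_2$.

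The eliminator is then the composite
\[
j \;=\; \varepsilon_1\circ\ell\circ\langle\chi,\,c\,\varepsilon_0\rangle \;:\; A^\I\longrightarrow C,
\]
where $\langle\chi,c\varepsilon_0\rangle:A^\I\to(A^\I)^\I\times_{A^\I}C$ is the pairing into the pullback; this pairing is well defined because $\varepsilon_0\chi=\rho\varepsilon_0$ coincides with the image of $c\varepsilon_0$ under $C\to A^\I$, using that $c$ is a section over $\rho$. That the composite $A^\I\xrightarrow{j}C\to A^\I$ is the identity (the lower triangle of \eqref{diag:Id-Elim_with_pathtypes}) follows from $\varepsilon_1\chi=\mathrm{id}$ together with the fact that $\ell$ lies over the base path. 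For the computation rule $j\rho=c$ (the upper triangle) I would precompose with $\rho$ and rewrite: normality of the connection gives $\chi\rho=\rho\rho$, so that $\langle\chi,c\varepsilon_0\rangle\rho=(\rho\times_{A^\I}C)\langle\rho,c\rangle$ (using $\varepsilon_0\rho=\mathrm{id}$); normality of $\ell$ then collapses the lift to a reflexivity path, $\ell(\rho\times_{A^\I}C)\langle\rho,c\rangle=\rho\,c$; and finally $\varepsilon_1\rho=\mathrm{id}$ yields $j\rho=c$.

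The main obstacle is not a single deep step but the diagrammatic bookkeeping needed to make this rigorous: transporting the fibration structure, the connection, and their normality data along pullbacks, and confirming the compatibility condition ($\varepsilon_0\chi=\rho\varepsilon_0$ matching $c\varepsilon_0$ over $\rho$) that makes the pairing $\langle\chi,c\varepsilon_0\rangle$ well defined. I expect the genuinely delicate point to be the normality computation, where the two normality squares --- that of the connection $\chi$ and that of the lifting operation $\ell$ --- must be composed coherently so that reflexivity is preserved strictly, matching the element-level identities $\chi_{\rho a}=\rho_{\rho a}$ and $\ell\langle\rho_{\rho a},ca\rangle=\rho\,ca$ of the sketch above.
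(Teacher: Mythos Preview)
Your proposal is correct and follows essentially the same route as the paper: you define $j=\varepsilon_1\circ\ell\circ\langle\chi,\,c\,\varepsilon_0\rangle$ via the pullback $(A^\I)^\I\times_{A^\I}C$, verify the lower triangle from $\varepsilon_1\chi=\mathrm{id}$ and the fact that $\ell$ lies over the base path, and derive the computation rule $j\rho=c$ by composing the two normality identities $\chi\rho=\rho\rho$ and $\ell\circ(\rho\times_{A^\I}C)=\rho\,p_2$. This is exactly the paper's argument, rendered with the same diagrammatic bookkeeping you anticipate.
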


\begin{proof}
The foregoing ``elementary'' proof is depicted in the following diagram:
\begin{equation}\label{diag:Id-Elim_diagram}
\begin{tikzcd}
A \ar[ddd,swap, "\rho"] \ar[rr, "c"] &&  {C} \ar[dd, "="]\\
	& C^\I \ar[ru, "\varepsilon_1"] & \\
	& * \ar[u, near start, swap, "\ell"] \ar[d] \ar[r] \pbmark & C \ar[d] \\
A^\I \ar[rr, bend right = 5ex, swap,"="] \ar[rruuu,dotted,near start, "j"] \ar[ur] \ar[r, "\chi"] 
	& (A^\I)^\I  \ar[r, "\varepsilon_0"] & {A^\I}
\end{tikzcd}
\end{equation}
Where $* = (A^\I)^\I \times_{A^\I} C$ is the indicated pullback, and the unlabelled map into it is the pair
\[
\langle \chi, c\varepsilon_0 \rangle : A^\I \too *\,.
\]
We leave the diagram chase (just sketched) to the reader.
\end{proof}

Since the map $j = \varepsilon_1 \circ\ell \circ \langle \chi, c\varepsilon_0 \rangle $ is defined from others that are stable under pullback (themselves being pullbacks of a universal instance, defined from structure on $\tT$), the substitution condition with respect to a change of context $\sigma : X' \to X$ will obtain; that is we shall have:
\[
(j^c)_\sigma = j^{c_\sigma} : C_\sigma\,.
\]
From this, it follows that we can determine a weak pullback structure map $J$ as required in \cite[Definition 11]{awodey2025}, relating the two polynomial functors associated with $\tT$ and $\TT^\I \to \Ty$, evaluated at $\tT$.

\begin{remark}[Tiny interval]
A simpler formulation of this ``substitution condition'' is available in the case when the interval $\I$ is a \emph{tiny} object, i.e.\ one for which the functor $(-)^\I$ of exponentiation has a right adjoint \emph{root} functor $(-)_\I$, as obtains e.g.\ in cubical sets, and in presheaves over any finite product category $\catC$ of ``contexts''.   In that case, let us reformulate the elimination diagram \eqref{diag:Id-Elim_with_pathtypes} in the equivalent form: 

\begin{equation}\label{diag:Id-Elim_with_pathtypes2}
\begin{tikzcd}
A \ar[d,swap, "\rho"] \ar[r, "c"] &  \TT \ar[d, "\mathsf{u}"]\\
A^\I\ar[r, swap,"C"]\ar[ru,swap,dotted,"j"]   & {\Ty}
\end{tikzcd}
\end{equation}
Then, writing $\rho = A^{!} : A \to A^\I$, for $!: \I \to 1$, we can transpose across the adjunction $(-)^\I \dashv (-)_\I$ to obtain the following equivalent problem:
\begin{equation}\label{diag:Id-Elim_with_pathtypes3}
\begin{tikzcd}
A \ar[ddr, bend right, swap, "{\tilde{C}}"] \ar[rd,swap,dotted,"\tilde{j}"] \ar[rrd, bend left, "{c}"] && \\
&\TT_\I \ar[d,swap, "\mathsf{u}_\I"] \ar[r, "\TT_!"] &  {\TT} \ar[d, "\mathsf{u}"]\\
&\Ty_\I  \ar[r, swap,"\Ty_!"]   & {\Ty}
\end{tikzcd}
\end{equation}
Note that we are using the fact that the terminal object $1$ is also tiny,
where $(-)_1= \mathsf{id}$ is the identity functor. 
The natural transformation between left adjoints $\rho = (-)^! : (-)^1 \to (-)^\I$ gives rise to a conjugate transformation between their right adjoints $(-)_! : (-)_\I \to (-)_1$.

The problem \eqref{diag:Id-Elim_with_pathtypes3} can then be reformulated without reference to $A, \tilde{C}, c$ as stating that the inner square is a weak pullback, or, again equivalently, that there exists a section $\tilde{\mathsf{J}}$ of the comparison map ${\langle \mathsf{u}_\I, \TT_! \rangle}$ to the actual pullback,
\begin{equation}\label{def:wps2}
\begin{tikzcd}
\TT_\I  \ar[rr] && \ar[ll, bend right, dotted, "\tilde{\mathsf{J}}" description] \Ty_\I \times_{\Ty} \TT 
\end{tikzcd}
\end{equation}
This condition is obviously independent of any particular maps $A : X \to \Ty$, etc., into the object $\tT$, and therefore evidently satisfies the strict rule of coherence under substitution.
Comparing the condition \eqref{def:wps2} with its counterpart in \cite[Definition 11]{awodey2025},
we have achieved a simplification in replacing the polynomial functors $P_\mathsf{u}, P_q : \hat{\catC}\to\hat{\catC}$ by the root functor $(-)_\I:\hat{\catC}\to\hat{\catC}$.
\end{remark}

\section{Examples} \label{sec:examples}

\begin{example}\label{example:sets}
  When $\I = 1$,
  (A2) holds for any map,
  and (A1) corresponds to the pullback square defining \emph{extensional} Identity types (cf.~\cite{awodey2025}):
  \begin{equation}
  \begin{tikzcd}
	\TT \ar[r,"\mathsf{refl}"] \ar[d,swap,"\delta"] &  \TT \ar[d, "\t"]\\  
	\TT \times_\Ty \TT \ar[r, swap,"\mathsf{Eq}"] & \Ty
  \end{tikzcd}
  \end{equation}

  In particular when $\EE = \psh{C}$ and $\tT$ is a Hofmann-Streicher universe \cite{hofmann1997},
  such a pullback square can always be constructed (see \cite[\S{7.1}]{awodey2026}).
\end{example}

\begin{example}\label{example:groupoids}
  Take $\EE = \Gpd$ to be the category of groupoids
  and $\I$ to be the interval groupoid
  (also called the ``walking isomorphism'').
  Then (A2) holds just if $\tT$ is an isofibration,
  and both (A1) and (A2) hold if we take $\tT$ to be
  (the core of) the forgetful functor from small pointed groupoids
  to small groupoids,
  as is the approach in \cite{hua2025}.
\end{example}

\begin{example} \label{example:cSet}
  Take $\EE = \mathsf{cSet}$ to be the category of
  Cartesian cubical sets, $\I = y(\Box_1)$
  to be the cubical interval,
  and $\tT$ to be a classifier for certain cubical fibrations,
  such as the classifier for small unbiased fibrations
  (henceforth just ``fibrations'') in
  \cite[Proposition 7.17]{awodey2026}.
  Then the classifier itself is a fibration,
  and therefore satisfies (A2).
  To construct path types (A1),
  suppose first that $A \to X$ is classified by $\tT$.
  Then in particular $A \to X$ is a (small) fibration,
  from which it follows that the relative pathobject
  $A^I \to A \times_{\!X} A$ is also a (small) fibration,
  and therefore is classified.
\[\begin{tikzcd}
	{A^I} & \TT \\
	{A \times_{\!X} A} & \Ty
	\arrow[from=1-1, to=1-2]
	\arrow[from=1-1, to=2-1]
	\arrow["\lrcorner"{anchor=center, pos=0.125}, draw=none, from=1-1, to=2-2]
	\arrow["\t", from=1-2, to=2-2]
	\arrow[from=2-1, to=2-2]
\end{tikzcd}\]
  We can then specialize this to the case where $A \to X$ is the universe $\t$ itself (the universe classifies itself trivially),
  resulting in the pullback square required for (A1).
\end{example}

The construction in Example \ref{example:cSet}
should generalize to any monoidal model category
with a classifier for small fibrations.
In fact, Examples \ref{example:sets} and \ref{example:groupoids} are also of this form.
The model structure for \ref{example:sets} has all maps as
cofibrations and fibrations,
and isomorphisms as weak equivalences.
The model structure for \ref{example:groupoids} is the so-called 
canonical model structure on groupoids.

\section{Cubical Kan fibrations}\label{sec:cubical-Kan}

Let $\EE$ be a finite limit category with an interval $1\rightrightarrows\I$, and $\tT$ a natural model in $\EE$ satisfying axioms (A1) and (A2) from Section \ref{sec:Interval}.   For the following, it will be convenient to have (well behaved) finite colimits in $\EE$, so we might as well assume that $\EE = \widehat\catC$ is presheaves on a category $\catC$, which we regard as the ``category of contexts''.  The powers $\I^n$ of the interval object $\I$ give rise to a cubical structure in $\EE$, with respect to which one can consider Kan-style ``box-filling'' conditions generalizing the path-lifting condition from Definition \ref{def:Hfibration} from an endpoint inclusion $1\to \I$ to an ``open box inclusion'' $\sqcup \mono \I^n$: a subobject defined as the join of all but one $(n-1)$-dimensional faces $\I^{n-1} \mono \I^n$. 

\begin{definition}\label{def:Kfibration}
A map $f: Y\to X$ is a \emph{(cubical) Kan fibration} if it has the right lifting property with respect to every open $n$-box inclusion $\sqcup \to \I^n$, for every $n>0$.  In more detail, given an open $n$-box $b:\sqcup \to \I^n$, and maps $x$ and $y$ as indicated below, there exists a diagonal filler $\tilde{x}$ making the following diagram commute.
\begin{equation}\label{diag:def_HFib}
\begin{tikzcd}
\sqcup\ar[d,swap, "{b}"] \ar[r, "y"] &  {Y} \ar[d, "f"]\\
\I^n \ar[r,swap,"x"]\ar[ru,swap,dotted,"\tilde{x}"]   & {X}
\end{tikzcd}
\end{equation}
\end{definition}

\begin{remark}
As with path-lifting, the box-filling condition can be strengthened by adding an object $Z$ of parameters:
\begin{equation}\label{diag:def_HFib_wZ}
\begin{tikzcd}
Z\times \sqcup\ar[d,swap, "{Z\times b}"] \ar[r, "y"] &  {Y} \ar[d, "f"]\\
Z\times \I^n \ar[r,swap,"x"]\ar[ru,swap,dotted,"\tilde{x}"]   & {X}
\end{tikzcd}
\end{equation}
One can then also prove the analogue of the following proposition for that version.
\end{remark}

\begin{proposition}\label{prop:Kanfilling}
Let $\tT$ be a natural model in $\EE$ satisfying axioms (A1) and (A2).  The maps $A\to X$ classified by $\tT$ are cubical Kan fibrations in the sense of Definition \ref{def:Kfibration}; that is, they have $n$-box filling for all $n>0$.
\end{proposition}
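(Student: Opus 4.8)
The plan is to prove the statement for \emph{all} maps classified by $\t$ simultaneously, by induction on $n$, reducing the $n$-box problem to an $(n-1)$-box problem for a \emph{relative path object}. Since the right lifting property against a fixed map is stable under pullback and every classified $A\to X$ is a pullback of $\t$, it is enough that $A\to X$ carries the relevant structure, which it does by pullback: it is a normal Hurewicz fibration by (A2); its relative path object $A^\I\to A\times_{\!X}A$ is \emph{again classified} by (A1) (being a pullback of $\TT^\I\to\TT\times_\Ty\TT$); and it admits a normal connection by Lemma \ref{lemma:connection-on-U}. The decisive structural input is that the class of classified maps is closed under forming relative path objects, so that the Hurewicz, path-object, and connection structures all persist as the induction descends.

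First I would set up the combinatorics via the Leibniz calculus for ${\otimes}\dashv{\Rightarrow}$. Choosing coordinates so that the omitted facet of $\sqcup_n\mono\I^n$ lies in the last direction (no loss, by symmetry of the cube), the open box factors as an iterated pushout–product of a boundary inclusion with one endpoint inclusion,
\[
\bigl(\sqcup_n\mono\I^n\bigr)\;\cong\;(\partial\I\mono\I)^{\otimes(n-1)}\otimes(d_\e:1\to\I)\;\cong\;(\partial\I\mono\I)\otimes\sqcup_{n-1},
\]
using that $(\partial\I\mono\I)^{\otimes(n-1)}$ is the full boundary inclusion $\partial\I^{n-1}\mono\I^{n-1}$ and regrouping by associativity of $\otimes$; here $\e\in\{0,1\}$ records which end the omitted facet sits at. By the adjunction, a classified $f$ lifts against $\sqcup_n$ exactly when the pullback–hom $(\partial\I\mono\I)\Rightarrow f$ lifts against $\sqcup_{n-1}$. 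Over the base prescribed by the bottom map $\I^n\to X$, this pullback–hom reduces to the (classified) relative path object of $f$, so the induction hypothesis, applied to that path object, disposes of the inductive step.

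It remains to treat the base case $n=1$, where $\sqcup_1\mono\I^1$ is a single endpoint inclusion $d_\e:1\to\I$. For $\e=0$ this is exactly the Hurewicz condition (A2). For $\e=1$ — open boxes whose omitted facet is a $d_0$-end — the path-lifting of (A2) runs the wrong way, and here I would invoke the connection: as in the box-filling construction in the proof of Lemma \ref{lemma:connection-on-U}, the normal connection $\chi$ supplies the square that converts a $0$-end lift into the required $1$-end lift, with normality ($\chi\rho=\rho\rho$) ensuring that reflexivity paths go to reflexivity paths, so that the degenerate cases cohere.

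The main obstacle I anticipate is not the formal dimension reduction — the Leibniz adjunction handles that cleanly — but keeping the transposed lifting problem \emph{inside the classified class}. The naive absolute pullback–hom $(\partial\I\mono\I)\Rightarrow f$ has codomain $(A\times A)\times_{X\times X}X^\I$ and is not itself classified; the point is that the bottom map of the reduced problem pins down its $X^\I$-component to the given path in $X$, and over that fixed path the pullback–hom restricts to the classified relative path object of $f$. Making this identification precise — equivalently, pulling $f$ back along $\I^n\to X$ and phrasing everything as a section-extension problem over the cube — is the crux, and is exactly where (A1) does the essential work. The remaining bookkeeping is the reduction of an arbitrary open box to the chosen normal form and the verification that the connection-based base case respects normality; the parametrized version of the proposition then follows by running the same argument with cylinders $Z\times(-)$ throughout.
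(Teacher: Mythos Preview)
Your strategy is the paper's: induction on $n$, using the Leibniz adjunction to trade $(n{+}1)$-box filling for $\tT$ against $n$-box filling for its relative path object (this is exactly Lemma~\ref{lemma:boxfilling}), with (A1) supplying the inductive step and (A2) the base case.

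The paper's execution is leaner in two respects. First, it argues for $\tT$ alone and passes to arbitrary classified maps only at the end by pullback-stability; this dissolves the issue you flag about the codomain of the absolute pullback-hom, since the relative path object of $\tT$ over $\Ty$ is $\TT^\I\to\TT\times_\Ty\TT$, which (A1) declares to be a pullback of $\tT$ outright---no need to restrict over a fixed path in the base. Second, the paper never invokes the connection of Lemma~\ref{lemma:connection-on-U}: the entire proof is two lines of induction using only Lemma~\ref{lemma:boxfilling}, (A1), and (A2). Your appeal to the connection for the $\e=1$ base case addresses a point the paper simply does not treat---its Hurewicz condition is one-sided by design, and Lemma~\ref{lemma:boxfilling} is proved for ``(upper, say)'' boxes $\partial\I^n\otimes d_0$ only---so this part of your proposal is extra relative to the paper's argument, and the sketch of how $\chi$ converts a $0$-end lift into a $1$-end lift would need to be made precise to stand on its own.
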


The proof of the proposition requires the following, a version of which is in \cite[Proposition 14]{awodey2018}.

\begin{lemma}\label{lemma:boxfilling}
 For any object $A$ in $\EE$, and all $n>0$, the following are equivalent:
 \begin{enumerate}
\item The pathobject $A^\I \to A\times A$ has $n$-box filling.
\item The object $A$ has $(n+1)$-box filling.
\end{enumerate}
Moreover, the same holds for any $A\to X$ in any slice category $\EE/_{\!X}$.
\end{lemma}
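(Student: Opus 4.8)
The plan is to analyze open $(n+1)$-boxes in $\I^{n+1}$ combinatorially and to convert between box-filling for $A$ and for its pathobject via the Leibniz adjunction ${\otimes} \dashv {\Rightarrow}$ already used in the proof of Definition \ref{def:Hfibration}. I work in $\EE = \widehat\catC$, so that the pushouts needed for the Leibniz tensor exist and the category is cartesian closed. First I would single out one coordinate, writing $\I^{n+1} = \I \times \I^n$ with the first factor as a ``path direction.'' An open $(n+1)$-box omits exactly one face, indexed by a coordinate $k$ and an endpoint $\varepsilon$, and I split the boxes into two families. If $k \geq 2$ the omitted face lies in a non-path coordinate, and keeping both caps $\{0,1\}\times\I^n$ together with all but one of the side faces $\I \times (\text{face of }\I^n)$ exhibits the box as the Leibniz tensor $[d_0,d_1]\otimes(\sqcup\hookrightarrow\I^n)$ of the endpoint pair $[d_0,d_1]:1+1\to\I$ with an open $n$-box $\sqcup\hookrightarrow\I^n$. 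The remaining two boxes ($k=1$) instead omit a cap.

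The key computation is the Leibniz adjunction applied to the first family. Since $A^{[d_0,d_1]} = \langle\varepsilon_0,\varepsilon_1\rangle$, the pullback-hom $[d_0,d_1]\Rightarrow(A\to 1)$ is precisely the pathobject $\varepsilon : A^\I \to A\times A$, so for every open $n$-box inclusion $i=(\sqcup\hookrightarrow\I^n)$ we have
\[
\bigl([d_0,d_1]\otimes i\bigr)\pitchfork (A\to 1)
\quad\Longleftrightarrow\quad
i \pitchfork \varepsilon .
\]
Letting $i$ range over all $2n$ open $n$-boxes shows that condition (1) — that $\varepsilon$ has $n$-box filling — is exactly equivalent to $A$ filling every first-family $(n+1)$-box. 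This already yields (2)$\,\Rightarrow\,$(1), since filling all $(n+1)$-boxes certainly fills those of the first family.

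For (1)$\,\Rightarrow\,$(2) I must additionally fill the two cap-boxes, and here I would use the permutation symmetry of the cube rather than the Leibniz calculus. The automorphism of $\I^{n+1}$ transposing coordinates $1$ and $2$ carries the box omitting $(\text{coord }1,\varepsilon)$ isomorphically onto the box omitting $(\text{coord }2,\varepsilon)$, a first-family box, while fixing the endpoint $\varepsilon$; composing a prospective filler with this isomorphism converts one lifting problem into the other. The main obstacle is exactly this point: the Leibniz adjunction alone produces only the boxes whose omitted face lies in a non-path coordinate, so the cap-boxes must be supplied separately, and invoking coordinate-permutation symmetry is the cleanest way to do so. I would emphasize that only a coordinate transposition is used — never a flip of the interval — so no symmetry between $d_0$ and $d_1$ is assumed.

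Finally, for the slice version I would run the identical argument inside $\EE/_{\!X}$, which is again a presheaf category and hence cartesian closed, using the pulled-back interval $1_X\rightrightarrows{!_X^*}\I$ and the relative pathobject of Definition \ref{def:Interval}(2). Since every object, map, and isomorphism used in the argument is stable under the pullback functor ${!_X^*}$ (and the relative pathobject is pullback-stable by the first Lemma of this section), nothing in the reasoning changes, which gives the asserted equivalence for any $A\to X$.
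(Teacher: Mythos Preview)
Your argument is correct and rests on the same Leibniz adjunction and the same identification $(A^\I \to A\times A) = \big(\partial\I \Rightarrow (A\to 1)\big)$ that the paper uses (your map $[d_0,d_1]$ is precisely the boundary inclusion $\partial\I \hookrightarrow \I$). The difference lies only in which pushout-product decomposition of the open box you choose. You write the first-family $(n{+}1)$-boxes as $\partial\I \otimes (\sqcup \hookrightarrow \I^n)$, transpose directly, and then must handle the two cap-boxes separately via a coordinate transposition. The paper instead writes the open $(n{+}1)$-box as $\partial\I^n \otimes d_\varepsilon$ --- which \emph{is} a cap-box --- and uses the identity $\partial\I^n = \partial\I \otimes \partial\I^{n-1}$ together with the symmetry of $\otimes$ to peel off a $\partial\I$ factor and move it across the adjunction, landing on $\partial\I^{n-1}\otimes d_\varepsilon = \sqcup^n$ in a single unbroken chain of equivalences. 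Thus the ``obstacle'' you flag --- that the adjunction misses the cap-boxes --- is an artifact of your particular decomposition rather than an intrinsic difficulty: with the paper's decomposition no case split arises. Both arguments ultimately rely on coordinate permutation to identify all $2(n{+}1)$ open boxes with the one or two canonical ones; the paper leaves this implicit (``the (upper, say) open boxes''), whereas you spell it out. Your route is more combinatorially explicit about the geometry; the paper's is more algebraically uniform and compact.
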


\begin{proof}
 We use the so-called ``Leibniz adjunction'' $\otimes \dashv\ \Rightarrow$ on the arrow category $\EE^\downarrow$ between the pushout-product $a\otimes b$ and the pullback-hom $c\Rightarrow d$, and recall that for all arrows $a,b,c$ in $\EE$, 
 \[
 a\otimes b \pitchfork c\quad \text{iff}\quad a \pitchfork b \Rightarrow c\,,
 \] 
where $x\pitchfork y$ is the relation of \emph{weak orthogonality}, meaning that every commutative square from $x$ to $y$ has a diagonal filler.   In these terms, we wish to show that for any $n$ and any open $n$-, resp.\ ($n+1$)-box inclusions,
\begin{align*}
&(\sqcup^n \to \I^n)\ \pitchfork\ (A^\I \to A\times A)\\
\text{iff}\quad &(\sqcup^{n+1} \to \I^{n+1})\ \pitchfork\ (A\to 1)\,.
\end{align*}
Now observe that the (upper, say) open boxes may be described as pushout-products of the form $\sqcup^{n+1} = \partial\I^{n} \otimes d_0$, while the boundary $\partial\I^{n} \to \I^{n}$ satisfies $\partial\I^{n} = \partial\I \otimes \partial\I^{n-1}$ (and so, generally, $\partial\I^m \otimes \partial\I^n = \partial\I^{m+n}$).  Finally, we have $(A^\I \to A\times A) = \big(\partial\I\Rightarrow (A\to 1)\big)$. Therefore
\begin{align*}
&(\sqcup^n \to \I^n)\ \pitchfork\ (A^\I \to A\times A)\\
\text{iff}\quad &(\partial\I^{n-1} \otimes d_0)\ \pitchfork\ (\partial\I\Rightarrow (A\to 1))\\
\text{iff}\quad &(\partial\I^{n-1} \otimes d_0)\otimes \partial\I\ \pitchfork\ (A\to 1)\\
\text{iff}\quad &(\partial\I\otimes\partial\I^{n-1} \otimes d_0)\ \pitchfork\ (A\to 1)\\
\text{iff}\quad &(\partial\I^n \otimes d_0)\ \pitchfork\ (A\to 1)\\
\text{iff}\quad &(\sqcup^{n+1} \to \I^{n+1})\ \pitchfork\ (A\to 1)\,.
\end{align*}
\end{proof}

\begin{proof}(of Proposition \ref{prop:Kanfilling})
It clearly suffices to prove that $\tT$ has $n$-box filling for all $n>0$, since any pullback of it will then have the same.  It has $1$-box filling by axiom (A2).  Suppose it has $n$-box filling; then by axiom (A1), so does the pathobject $\TT^\I \to \TT \times_\Ty \TT$, since it is a pullback of $\tT$.  By Lemma \ref{lemma:boxfilling}, $\tT$ then has $(n+1)$-box filling. Thus, by induction, $\tT$ has $n$-box filling for all $n>0$.
\end{proof}

\section{Formalization in Lean}\label{sec:formalization}

A version of Proposition \ref{prop:Id-Elim} is formalized as part of the HoTTLean project, 
which uses path types as a smart constructor for semantic identity types.
A key difference between the formalization and the exposition in this paper is that for the sake of simplicity,
only item (1) in Definition \ref{def:Hfibration} is used,
namely the definition of a Hurewicz fibration ``on the left'' of the adjunction $\I \times (-) \vdash (-)^\I$.
The main benefit of working ``on the left''
is that the definitions hold in a more general setting,
without needing to handle exponentiability conditions.
On the other hand, we will not be able to work \emph{universally}:
the Hurewitz lifting structure will be required to be uniform
(stable under substitution),
whereas working with a single lifting operation on the natural model
$\ell : \Ty^\I \times_{\!X} \TT \to \TT^\I$
instead guarantees a uniform lifting structure.
This prevents us from defining Path types in the style of 
natural models:
instead of describing (A1) as a pullback,
the formalization unpacks the universal property of the pullback in elementary terms
similar to categories with families \cite{dybjer2014}.

For the benefits of abstraction,
the formalization replaces the interval object $\I$ with an endofunctor
$\mathcal{I}: \EE \to \EE$ replacing $\I \times (-)$,
equipped with natural transformations
$\delta_0, \delta_1 : 1 \tto \mathcal{I}$
replacing $\delta_i \times (-) : 1 \times (-) \tto \I \times (-)$,
$\pi : \mathcal{I} \to 1$,
and \lstinline{symm} $ : \II \circ \II \to \II \circ \II$ satisfying
some equations.
Accordingly, the rest of the proof of \ref{prop:Id-Elim} is also phrased in this style.
This is captured in the following Lean definition
\lstinline{Cylinder}.

\begin{lstlisting}
structure Cylinder (Ctx : Type u) [Category.{v} Ctx] where
  (I : Ctx ⥤ Ctx)
  (δ0 δ1 : $\mathbf{1}$ _ ⟶ I)
  ($\pi$ : I ⟶ $\mathbf{1}$ _)
  (δ0_$\pi$ : δ0 ≫ $\pi$ = $\mathbf{1}$ _)
  (δ1_$\pi$ : δ1 ≫ $\pi$ = $\mathbf{1}$ _)
  (symm : I ⋙ I ⟶ I ⋙ I)
  (symm_symm : symm ≫ symm = $\mathbf{1}$ _)
  (whiskerLeft_I_δ0_symm : whiskerLeft I δ0 ≫ symm = whiskerRight δ0 I)
  (whiskerLeft_I_δ1_symm : whiskerLeft I δ1 ≫ symm = whiskerRight δ1 I)
  (symm_π_π : symm ≫ whiskerLeft I π ≫ π = whiskerLeft I π ≫ π)
\end{lstlisting}

A universe is formalized as a morphism \lstinline{tp}
with chosen pullbacks \lstinline{ext A} for every map
\lstinline{A} into the base.

\begin{lstlisting}
structure UnstructuredUniverse (Ctx : Type u) [Category Ctx] where
  Tm : Ctx
  Ty : Ctx
  tp : Tm ⟶ Ty
  ext {Γ : Ctx} (A : Γ ⟶ Ty) : Ctx
  disp {Γ : Ctx} (A : Γ ⟶ Ty) : ext A ⟶ Γ
  var {Γ : Ctx} (A : Γ ⟶ Ty) : ext A ⟶ Tm
  disp_pullback {Γ : Ctx} (A : Γ ⟶ Ty) :
    IsPullback (var A) (disp A) tp A
\end{lstlisting}

Given a universe, 
the definition of Path types
(corresponding to Definition \ref{def:pathtypes}) can be stated.

\begin{lstlisting}
structure PathType (U : UnstructuredUniverse Ctx) where
  (Path : ∀ {Γ} {A : Γ ⟶ U.Ty} (a0 a1 : Γ ⟶ U.Tm), (a0 ≫ U.tp = A) → a1 ≫ U.tp = A →
    (Γ ⟶ U.Ty))
  (Path_comp : ∀ {Γ Δ} (σ : Δ ⟶ Γ) {A : Γ ⟶ U.Ty} (a0 a1 : Γ ⟶ U.Tm)
    (a0_tp : a0 ≫ U.tp = A) (a1_tp : a1 ≫ U.tp = A),
    Path (A := σ ≫ A) (σ ≫ a0) (σ ≫ a1) (by simp [a0_tp]) (by simp [a1_tp]) =
    σ ≫ Path a0 a1 a0_tp a1_tp)
  (path : ∀ {Γ} {A : Γ ⟶ U.Ty} (p : cyl.I.obj Γ ⟶ U.Tm),
    p ≫ U.tp = cyl.π.app Γ ≫ A → (Γ ⟶ U.Tm))
  (path_comp : ∀ {Γ Δ} (σ : Δ ⟶ Γ) {A : Γ ⟶ U.Ty}
    (p : cyl.I.obj Γ ⟶ U.Tm) (p_tp : p ≫ U.tp = cyl.π.app Γ ≫ A),
    path (A := σ ≫ A) ((cyl.I.map σ) ≫ p) (by simp [p_tp]) =
    σ ≫ path p p_tp)
  (path_tp : ∀ {Γ} {A : Γ ⟶ U.Ty} (a0 a1 : Γ ⟶ U.Tm) (p : cyl.I.obj Γ ⟶ U.Tm)
    (p_tp : p ≫ U.tp = cyl.π.app Γ ≫ A) (δ0_p : cyl.δ0.app Γ ≫ p = a0)
    (δ1_p : cyl.δ1.app Γ ≫ p = a1), path p p_tp ≫ U.tp =
    Path (A := A) a0 a1 (by simp [← δ0_p, p_tp]) (by simp [← δ1_p, p_tp]))
  (unpath : ∀ {Γ} {A : Γ ⟶ U.Ty} (a0 a1 : Γ ⟶ U.Tm) (a0_tp : a0 ≫ U.tp = A)
    (a1_tp : a1 ≫ U.tp = A) (p : Γ ⟶ U.Tm), p ≫ U.tp =
    Path a0 a1 a0_tp a1_tp → (cyl.I.obj Γ ⟶ U.Tm))
  (unpath_tp : ∀ {Γ} {A : Γ ⟶ U.Ty} (a0 a1 : Γ ⟶ U.Tm) (a0_tp : a0 ≫ U.tp = A)
    (a1_tp : a1 ≫ U.tp = A) (p : Γ ⟶ U.Tm) (p_tp : p ≫ U.tp = Path a0 a1 a0_tp a1_tp),
    unpath a0 a1 a0_tp a1_tp p p_tp ≫ U.tp = cyl.π.app _ ≫ A)
  (δ0_unpath : ∀ {Γ} {A : Γ ⟶ U.Ty} (a0 a1 : Γ ⟶ U.Tm) (a0_tp : a0 ≫ U.tp = A)
    (a1_tp : a1 ≫ U.tp = A) (p : Γ ⟶ U.Tm) (p_tp : p ≫ U.tp = Path a0 a1 a0_tp a1_tp),
    cyl.δ0.app _ ≫ unpath a0 a1 a0_tp a1_tp p p_tp = a0)
  (δ1_unpath : ∀ {Γ} {A : Γ ⟶ U.Ty} (a0 a1 : Γ ⟶ U.Tm) (a0_tp : a0 ≫ U.tp = A)
    (a1_tp : a1 ≫ U.tp = A) (p : Γ ⟶ U.Tm) (p_tp : p ≫ U.tp = Path a0 a1 a0_tp a1_tp),
    cyl.δ1.app _ ≫ unpath a0 a1 a0_tp a1_tp p p_tp = a1)
  (unpath_path : ∀ {Γ} {A : Γ ⟶ U.Ty} (a0 a1 : Γ ⟶ U.Tm) (p : cyl.I.obj Γ ⟶ U.Tm)
    (p_tp : p ≫ U.tp = cyl.π.app Γ ≫ A) (δ0_p : cyl.δ0.app Γ ≫ p = a0)
    (δ1_p : cyl.δ1.app Γ ≫ p = a1),
    unpath (A := A) a0 a1 (by simp [← δ0_p, p_tp]) (by simp [← δ1_p, p_tp])
    (path p p_tp) (path_tp a0 a1 p p_tp δ0_p δ1_p) = p)
  (path_unpath : ∀ {Γ} {A : Γ ⟶ U.Ty} (a0 a1 : Γ ⟶ U.Tm) (a0_tp : a0 ≫ U.tp = A)
    (a1_tp : a1 ≫ U.tp = A) (p : Γ ⟶ U.Tm) (p_tp : p ≫ U.tp = Path a0 a1 a0_tp a1_tp),
    path (A := A) (unpath a0 a1 a0_tp a1_tp p p_tp)
    (unpath_tp ..) = p)
\end{lstlisting}

The fields \lstinline{Path} and \lstinline{path} respectively provide
the formation and introduction rules for Path types,
corresponding respectively to the bottom and top arrows in the pullback diagram
\ref{diag:pathtypes} for Definition \ref{def:pathtypes}.
Both \lstinline{Path} and \lstinline{path} are stable under
substitution by \lstinline{Path_comp} and \lstinline{path_comp}.
The typing judgment for \lstinline{path} is \lstinline{path_tp}.

A Hurewicz fibration stated in terms of a
\lstinline{Cylinder} is then given.
The field \lstinline{lift} provides a chosen
diagonal lift,
with commutativity given by equations \lstinline{lift_comp_self}
and \lstinline{δ0_comp_lift}.

\begin{lstlisting}
structure Hurewicz {X Y : Ctx} (f : Y ⟶ X) where
  (lift : ∀ {Γ} (y : Γ ⟶ Y) (p : cyl.I.obj Γ ⟶ X), y ≫ f = cyl.δ0.app Γ ≫ p →
    (cyl.I.obj Γ ⟶ Y))
  (lift_comp_self : ∀ {Γ} (y : Γ ⟶ Y) (p : cyl.I.obj Γ ⟶ X)
    (comm_sq : y ≫ f = cyl.δ0.app Γ ≫ p), lift y p comm_sq ≫ f = p)
  (δ0_comp_lift : ∀ {Γ} (y : Γ ⟶ Y) (p : cyl.I.obj Γ ⟶ X)
    (comm_sq : y ≫ f = cyl.δ0.app Γ ≫ p), cyl.δ0.app Γ ≫ lift y p comm_sq = y)
\end{lstlisting}

A Hurewitz lifting structure is normal when
lifting a constant path produces a constant path.

\[\begin{tikzcd}
	Z && Y \\
	{\mathcal{I} Z} & Z & X
	\arrow["y", from=1-1, to=1-3]
	\arrow["{\delta_0}"', from=1-1, to=2-1]
	\arrow[from=1-3, to=2-3]
	\arrow[shorten >= 5pt, dashed, from=2-1, to=1-3]
	\arrow["\pi"', from=2-1, to=2-2]
	\arrow["y"{description}, from=2-2, to=1-3]
	\arrow["x"', from=2-2, to=2-3]
\end{tikzcd}\]

\begin{lstlisting}
class Hurewicz.IsNormal : Prop where
  (isNormal : ∀ {Γ} (y : Γ ⟶ Y) (p : cyl.I.obj Γ ⟶ X)
    (comm_sq : y ≫ f = cyl.δ0.app Γ ≫ p) (x : Γ ⟶ X),
    p = cyl.π.app Γ ≫ x → hrwcz.lift y p comm_sq = cyl.π.app Γ ≫ y)
\end{lstlisting}

As mentioned above,
we must also consider uniformity conditions on the lifting structure.
This means that for any map $\sigma : \Delta \to \Gamma$,
the path lifting over $\Gamma$ commutes with path lifting over $\Delta$.

\[\begin{tikzcd}
	\Delta & \Gamma & Y \\
	{\mathcal{I} \, \Delta} & {\mathcal{I} \, \Gamma} & X
	\arrow["\sigma", from=1-1, to=1-2]
	\arrow["{\delta_0}"', from=1-1, to=2-1]
	\arrow["y", from=1-2, to=1-3]
	\arrow[from=1-2, to=2-2]
	\arrow[from=1-3, to=2-3]
	\arrow[shorten >= 5pt, dashed, from=2-1, to=1-3]
	\arrow["{\mathcal{I} \sigma}"', from=2-1, to=2-2]
	\arrow[dashed, from=2-2, to=1-3]
	\arrow["x"', from=2-2, to=2-3]
\end{tikzcd}\]

\begin{lstlisting}
class Hurewicz.IsUniform : Prop where
  (lift_comp : ∀ {Γ Δ} (σ : Δ ⟶ Γ) (y : Γ ⟶ Y)
    (p : cyl.I.obj Γ ⟶ X) (comm_sq : y ≫ f = cyl.δ0.app Γ ≫ p),
    hrwcz.lift (σ ≫ y) (cyl.I.map σ ≫ p) (by simp [comm_sq, δ0_naturality_assoc]) = cyl.I.map σ ≫ hrwcz.lift y p comm_sq)
\end{lstlisting}

Then Proposition \ref{prop:Id-Elim} can be stated as follows.

\begin{lstlisting}
def polymorphicIdElim (P0 : PathType cyl U0) (hrwcz0 : Hurewicz cyl U0.tp) [hrwcz0.IsUniform]
    [hrwcz0.IsNormal] (U1 : UnstructuredUniverse Ctx) (hrwcz1 : Hurewicz cyl U1.tp)
    [Hurewicz.IsUniform hrwcz1] [Hurewicz.IsNormal hrwcz1] :
    PolymorphicIdElim (polymorphicIdIntro P0) U1 where
  ...
\end{lstlisting}

This says assuming two universes \lstinline{U0} and \lstinline{U1}
both equipped with normal uniform Hurewicz lifting structures,
a Path type structure on \lstinline{U0} gives rise to an
intensional Identity type structure on \lstinline{U0}
with elimination into types in \lstinline{U1}.
Intensional Identity introduction and elimination are formulated as follows.

\begin{lstlisting}
structure PolymorphicIdIntro (U0 U1 : UnstructuredUniverse Ctx) where
  (Id : ∀ {Γ} {A : Γ ⟶ U0.Ty} (a0 a1 : Γ ⟶ U0.Tm), (a0 ≫ U0.tp = A) → a1 ≫ U0.tp = A →
    (Γ ⟶ U1.Ty))
  (Id_comp : ∀ {Γ Δ} (σ : Δ ⟶ Γ) {A : Γ ⟶ U0.Ty} (a0 a1 : Γ ⟶ U0.Tm)
    (a0_tp : a0 ≫ U0.tp = A) (a1_tp : a1 ≫ U0.tp = A),
    Id (A := σ ≫ A) (σ ≫ a0) (σ ≫ a1) (by cat_disch) (by cat_disch) = σ ≫ Id a0 a1 a0_tp a1_tp)
  (refl : ∀ {Γ} {A : Γ ⟶ U0.Ty} (a : Γ ⟶ U0.Tm), a ≫ U0.tp = A → (Γ ⟶ U1.Tm))
  (refl_comp : ∀ {Γ Δ} (σ : Δ ⟶ Γ) {A : Γ ⟶ U0.Ty} (a : Γ ⟶ U0.Tm)
    (a_tp : a ≫ U0.tp = A), refl (σ ≫ a) (by cat_disch) = σ ≫ refl a a_tp)
  (refl_tp : ∀ {Γ} {A : Γ ⟶ U0.Ty} (a : Γ ⟶ U0.Tm) (a_tp : a ≫ U0.tp = A),
    refl a a_tp ≫ U1.tp = Id a a a_tp a_tp)

structure PolymorphicIdElim (U2 : UnstructuredUniverse Ctx) where
  (jElim : ∀ {Γ} {A : Γ ⟶ U0.Ty} (a : Γ ⟶ U0.Tm) (a_tp : a ≫ U0.tp = A)
    (C : i.motiveCtx a a_tp ⟶ U2.Ty)
    (c : Γ ⟶ U2.Tm), (c ≫ U2.tp = i.reflInst a a_tp ≫ C) →
    (i.motiveCtx a a_tp ⟶ U2.Tm))
  (jElim_comp : ∀ {Γ Δ} (σ : Δ ⟶ Γ) {A : Γ ⟶ U0.Ty} (a : Γ ⟶ U0.Tm) (a_tp : a ≫ U0.tp = A)
    (C : i.motiveCtx a a_tp ⟶ U2.Ty)
    (c : Γ ⟶ U2.Tm) (c_tp : c ≫ U2.tp = i.reflInst a a_tp ≫ C),
    jElim (σ ≫ a) (by simp [a_tp]) (i.motiveSubst σ a a_tp rfl ≫ C) (σ ≫ c)
      (by simp [*, reflInst_comp_motiveSubst_assoc]) =
    i.motiveSubst σ a a_tp rfl ≫ jElim a a_tp C c c_tp)
  (jElim_tp : ∀ {Γ} {A : Γ ⟶ U0.Ty} (a : Γ ⟶ U0.Tm) (a_tp : a ≫ U0.tp = A)
    (C : i.motiveCtx a a_tp ⟶ U2.Ty)
    (c : Γ ⟶ U2.Tm) (c_tp : c ≫ U2.tp = i.reflInst a a_tp ≫ C),
    jElim a a_tp C c c_tp ≫ U2.tp = C)
  (reflSubst_jElim : ∀ {Γ} {A : Γ ⟶ U0.Ty} (a : Γ ⟶ U0.Tm) (a_tp : a ≫ U0.tp = A)
    (C : i.motiveCtx a a_tp ⟶ U2.Ty)
    (c : Γ ⟶ U2.Tm) (c_tp : c ≫ U2.tp = i.reflInst a a_tp ≫ C),
    i.reflInst a a_tp ≫ jElim a a_tp C c c_tp = c)
\end{lstlisting}

\begin{thebibliography}{CCHM16}

\bibitem[Awo16]{awodey2016}
Steve Awodey.
\newblock Natural models of homotopy type theory.
\newblock {\em {Mathematical Structures in Computer Science}}, 28(2):241--286,
  2016.

\bibitem[Awo18]{awodey2018}
Steve Awodey.
\newblock A cubical model of homotopy type theory.
\newblock {\em {Annals of Pure and Applied Logic}}, 169(12):1270--1294, 2018.
\newblock Logic Colloquium 2015.

\bibitem[Awo25]{awodey2025}
Steve Awodey.
\newblock {Algebraic Type Theory, Part 1:\ Martin-L\"of algebras}.
\newblock {\em arXiv:2505.10761}, 2025.

\bibitem[Awo26]{awodey2026}
Steve Awodey.
\newblock {\em Cartesian cubical model categories}, volume 2385 of {\em Lecture
  Notes in Mathematics}.
\newblock Springer, 2026.

\bibitem[Ber18]{berg2018}
Benno van~den Berg.
\newblock Path categories and propositional identity types.
\newblock {\em {ACM Transactions on Computational Logic (TOCL)}}, 19(2):1--32,
  2018.
  
\bibitem[BM18]{BergMoerdijk2018}
Benno {van den Berg} and Ieke Moerdijk.
\newblock Exact completion of path categories and algebraic set theory, part
  {I}:\ exact completion of path categories.
\newblock {\em Journal of Pure and Applied Algebra}, 222(10):3137--3181, 2018.

\bibitem[CCD21]{dybjer2021}
Simon Castellan, Pierre Clairambault, and Peter Dybjer.
\newblock {Categories with families:\ {U}nityped, simply typed, and dependently
  typed}.
\newblock In {\em {Joachim Lambek: The Interplay of Mathematics, Logic, and
  Linguistics}}, pages 135--180. Springer, 2021.

\bibitem[CCHM16]{cohen2016}
Cyril Cohen, Thierry Coquand, Simon Huber, and Anders M{\"o}rtberg.
\newblock Cubical type theory: {A} constructive interpretation of the
  univalence axiom.
\newblock {\em arXiv:1611.02108}, 2016.

\bibitem[CD14]{dybjer2014}
Pierre Clairambault and Peter Dybjer.
\newblock {The biequivalence of locally cartesian closed categories and
  Martin-L\"of type theories}.
\newblock {\em {Mathematical Structures in Computer Science}}, 24(6):e240606,
  2014.

\bibitem[HAC{\etalchar{+}}25]{hua2025}
Joseph Hua, Steve Awodey, Mario Carneiro, Sina Hazratpour, Wojciech Nawrocki,
  Spencer Woolfson, and Yiming Xu.
\newblock {HoTTLean: Formalizing the Meta-Theory of HoTT in Lean}.
\newblock {\em TYPES 2025}, 2025.

\bibitem[HS97]{hofmann1997}
Martin Hofmann and Thomas Streicher.
\newblock Lifting {G}rothendieck universes.
\newblock {\em Unpublished note}, 1997.

\bibitem[Joy17]{joyal2017}
Andr{\'e} Joyal.
\newblock Notes on clans and tribes.
\newblock {\em arXiv:1710.10238}, 2017.

\bibitem[JT07]{JT2007}
Andr{\'e} Joyal and Myles Tierney.
\newblock {Quasi-categories vs Segal spaces}.
\newblock {\em Contemporary Mathematics}, 431, 2007.

\bibitem[Shu19]{shulman2019}
Michael Shulman.
\newblock {All $(\infty,1)$-toposes have strict univalent universes}.
\newblock {\em arXiv:1904.07004}, 2019.

\end{thebibliography}

\end{document}